\newtheorem{theorem}{Theorem}[section]
\newtheorem{lemma}[theorem]{Lemma}
\newtheorem{definition}[theorem]{Definition}
\newtheorem{remark}{Remark}
\newcommand{\R}{\mathbb{R}}
\begin{document}

\title{On Fractional Variational Problems\\ which Admit Local Transformations}

\author{Agnieszka B. Malinowska\\
\url{a.malinowska@pb.edu.pl}}

\date{Faculty of Computer Science,
Bia{\l}ystok University of Technology,\\
15-351 Bia\l ystok, Poland}

\maketitle\


\begin{abstract}
We extend the second Noether theorem to fractional variational problems which are invariant under infinitesimal transformations that depend upon
$r$ arbitrary functions and their fractional derivatives in the sense of Caputo. Our main result is illustrated using the fractional Lagrangian density of the electromagnetic field.

\bigskip

\noindent \textbf{Keywords}: Lagrangian systems; fractional calculus; gauge symmetries; second Noether's theorem; electromagnetic field.

\medskip

\noindent {\bf MSC}: 49K05, 49S05, 26A33

\medskip

\noindent {\bf PACS}: 04.20.Fy, 45.10.Db, 45.10.Hj
\end{abstract}


\section{Introduction}

In 1918 Emmy Noether published a paper that is now famous for Noether's theorem. But in fact Noether proved two theorems in the 1918 paper.
The first theorem explains the correspondence between conserved quantities and continuous symmetry transformations that depend on constant parameters. Such transformations are global transformations. Familiar examples from classical mechanics include the connections between:
spatial translations and conservation of linear momentum; spatial rotations and conservation of angular momentum; and time translations and conservation
of energy (\citet{gelfand,logan2}). The second theorem, less well known, guarantees syzygies between the Euler--Lagrange equations for a variational problem which is invariant under transformations that depend on arbitrary functions and their derivatives. Such transformations are local transformations. The statement of the theorem for this case is very general and, aside from its application to general relativity, it applies in a wide variety of other cases. For example, quantum chromodynamics and other gauge field theories are theories to which it applies. From the second theorem, one has identities between Lagrange expressions and their derivatives. These identities Noether called ``dependencies''. For example, the Bianchi identities, in the general theory of relativity, are examples of such ``dependencies''. In electrodynamics, if the Lagrangian represents a charged particle interacting with a electromagnetic field, one finds that it is invariant under the combined action of the so called gauge transformation of the first
kind on the charged particle field, and a gauge transformation of the second kind on the electromagnetic field. As a result of this invariance it follows, from second Noether's theorem, the conservation of charge. For a complete history of Noether's two theorems on variational symmetries see \citet{brading1,Kosmann}; and also \citet{Carinena,hydon,logan1,torres} for some other generalizations.

Fractional calculus is a discipline that studies integrals
and derivatives of non-integer (real or complex) order
(\citet{book:Kilbas,book:Klimek,book:Podlubny,samko}).
The field was born in 1695 and became an ongoing topic
with many well-known mathematicians contributing to its theory (see \citet{TM:old} for a review).
Fractional derivatives are nonlocal operators and are historically applied
in the study of nonlocal or time dependent processes. The first and well established application
of fractional calculus in Physics was in the framework of
anomalous diffusion, which are related to features observed in many physical
systems, e.g. in dispersive transport in amorphous semiconductor, liquid crystals,
polymers, proteins, etc. Electromagnetic equations (\citet{Baleanu3}) and
field theories (\citet{MyID:182,Cresson,herman,Tarasov}) have been
also recently considered in the context of fractional calculus.
The subject is nowadays very active due to its many applications
in mechanics, chemistry, biology, economics, and control theory (see \citet{book:Baleanu,ferreira,hilfer2,TM:rec} for a review).

One of the most remarkable applications of fractional calculus is in the context of
classical mechanics. \citet{rie,rie2} showed that a Lagrangian involving fractional
time derivatives leads to an equation of motion with nonconservative forces such
as friction. It is a remarkable result since frictional and nonconservative forces
are beyond the usual macroscopic variational treatment, and consequently,
beyond the most advanced methods of classical mechanics. Riewe generalized
the usual variational calculus, by considering Lagrangians that dependent on fractional derivatives,
in order to deal with nonconservative forces. Recently, several
approaches have been developed to generalize the least action principle and the
Euler--Lagrange equations to include fractional derivatives. Investigations cover problems depending on the Caputo fractional derivatives (\citet{agrawalCap,Shakoor:01,Baleanu:Agrawal,BALEANU,MyID:169,MyID:217,Tarasov2}),
the Riemann--Liouville fractional derivatives (\citet{Almeida2,Atanackovic,Baleanu:Avkar,Baleanu:MUSLIH,Herzallah:Baleanu}) and others (\citet{Agrawal,El-Nabulsi:Torres07,El-Nabulsi,p182:Jumarie4,mal}).

Noether's first theorems have been extended to fractional variational problems using several approaches
(\citet{Atanackovic2,Cresson,MyID:068,Frederico:Torres08,MyID:149}). To the best of our knowledge, no second Noether type theorem
is available for the fractional setting. Such a generalization is the aim of this paper. We prove the fractional second Noether theorem for one and multiple dimensional Lagrangians, and we show how our results can be applied for fractional electromagnetic field. We trust that this paper will open several new directions of research and applications. Although we choose the
Caputo calculus, our results are general and can be straightforward
generalized to other fractional calculus, like the Riemann--Liouville fractional calculus
and others approaches.

The paper is organized in the following way. In Section~\ref{sec2} we recall the notion of fractional derivatives and their basic properties, that are needed in the sequel. The intended fractional second Noether type theorem is formulated and proved in Section~\ref{main}, for single (Subsection~\ref{main:single}) and multiple (Subsection~\ref{main:multiple}) integral problems. Our main result is illustrated in Section~\ref{example} using the fractional Lagrangian density of the electromagnetic field.

\section{Fractional Derivatives}
\label{sec2}

In this section we review the necessary definitions and facts from
the fractional calculus. For more on the subject we refer the reader to
(\citet{book:Kilbas,book:Klimek,book:Podlubny,samko}).

Let  $\alpha \in \R$ and $0<\alpha<1$, $f\in L_1([a,b],\R)$. By the left Riemann--Liouville fractional integral of $f$ on the interval $[a,b]$ we mean a function ${_aI_x^\alpha}f$ defined by:
\begin{equation}\label{RLFI1}
{_aI_x^\alpha}f(x)=\frac{1}{\Gamma(\alpha)}\int_a^x
(x-t)^{\alpha-1}f(t)dt,\quad x\in[a,b] \mbox{ a.e. }
\end{equation}
By the right Riemann--Liouville fractional integral of $f$ on the interval $[a,b]$ we mean a function ${_xI_b^\alpha}f$ defined by:
\begin{equation}\label{RLFI2}
{_xI_b^\alpha}f(x)=\frac{1}{\Gamma(\alpha)}\int_x^b(t-x)^{\alpha-1}
f(t)dt,\quad x\in[a,b] \mbox{ a.e. }
\end{equation}
$\Gamma(\cdot)$ represents the Gamma function. For $\alpha=0$, we set
${_aI_x^0}f={_xI_b^0}f:=If$, the identity operator.

If the function ${_aI_x^\alpha}f$ is absolutely continuous on the interval $[a,b]$, then the left
Riemann--Liouville fractional derivative is given by:
\begin{equation}\label{RLFD1}
{_aD_x^\alpha}f(x)=\frac{1}{\Gamma(1-\alpha)}\frac{d}{dx}\int_a^x
(x-t)^{-\alpha}f(t)dt=\frac{d}{dx}{_aI_x^{1-\alpha}}f(x).
\end{equation}
If the function ${_xI_b^{1-\alpha}}f$ is absolutely continuous on the interval $[a,b]$, then the right
Riemann--Liouville fractional derivative is given by:
\begin{equation}\label{RLFD2}
{_xD_b^\alpha}f(x)=\frac{-1}{\Gamma(1-\alpha)}\frac{d}{dx}\int_x^b
(t-x)^{-\alpha}
f(t)dt=\left(-\frac{d}{dx}\right){_xI_b^{1-\alpha}}f(x).
\end{equation}
Let $f\in AC([a,b],\R)$. By the left Caputo fractional derivative of $f$ on the interval $[a,b]$ we mean a function ${^C_aD_x^\alpha}f$ defined by:
\begin{equation}\label{CFD1}
{^C_aD_x^\alpha}f(x)=\frac{1}{\Gamma(1-\alpha)}\int_a^x
(x-t)^{-\alpha}\frac{d}{dt}f(t)dt={_aI_x^{1-\alpha}}\frac{d}{dx}f(x),
\end{equation}
and by the right Caputo fractional derivative of $f$ on the interval $[a,b]$ we mean a function ${^C_aD_x^\alpha}f$ defined by:
\begin{equation}\label{CFD2}
{^C_xD_b^\alpha}f(x)=\frac{-1}{\Gamma(1-\alpha)}\int_x^b
(t-x)^{-\alpha}
\frac{d}{dt}f(t)dt={_xI_b^{1-\alpha}}\left(-\frac{d}{dx}\right)f(x).
\end{equation}

\begin{remark}
Observe that if $\alpha$ goes to $1$, then under suitable assumptions operators ${_aD_x^\alpha}$ and ${^C_aD_x^\alpha}$ can be replaced with $\frac{d}{dx}$, and operators ${_{x}D_{b}^{{\alpha}}}$ and ${^C_xD_b^\alpha}$ can be replaced with $-\frac{d}{dx}$ (see \citet{book:Podlubny}).
\end{remark}

The operators \eqref{RLFI1}--\eqref{CFD2} are obviously linear. Below we present the rules of fractional integration by parts for Riemann--Liouville fractional integral and Caputo fractional derivatives which are particularly useful for our purposes.
\begin{lemma}[\citet{int:partsRef}]
Let $0<\alpha<1$, $p\geq1$,
$q \geq 1$, and $1/p+1/q\leq1+\alpha$. If $g\in L_p([a,b])$ and
$f\in L_q([a,b])$, then
\begin{equation}\label{ipi}
\int_{a}^{b}  g(x){_aI_x^\alpha}f(x)dx =\int_a^b f(x){_x I_b^\alpha}
g(x)dx.
\end{equation}
\end{lemma}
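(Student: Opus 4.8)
The plan is to deduce \eqref{ipi} from Fubini's theorem. Substituting the definition \eqref{RLFI1} into the left-hand side,
\[
\int_a^b g(x)\,{_aI_x^\alpha}f(x)\,dx
=\frac{1}{\Gamma(\alpha)}\int_a^b\!\!\int_a^x (x-t)^{\alpha-1}\,g(x)\,f(t)\,dt\,dx ,
\]
which is the integral of $(x,t)\mapsto (x-t)^{\alpha-1}g(x)f(t)$ over the triangle $\Delta=\{(x,t):a\le t\le x\le b\}$. Granting that this integral is absolutely convergent, Fubini's theorem lets me integrate first over $x\in[t,b]$ and then over $t\in[a,b]$; since $\frac{1}{\Gamma(\alpha)}\int_t^b (x-t)^{\alpha-1}g(x)\,dx={_tI_b^\alpha}g(t)$ by \eqref{RLFI2}, the right-hand side of \eqref{ipi} appears after renaming $t$ as $x$. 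So the whole proof reduces to one integrability claim.

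That claim is
\[
\iint_\Delta (x-t)^{\alpha-1}\,|g(x)|\,|f(t)|\,dt\,dx=\Gamma(\alpha)\int_a^b |g(x)|\,\bigl({_aI_x^\alpha}|f|\bigr)(x)\,dx<\infty ,
\]
and this is exactly where the hypotheses $p,q\ge1$ and $1/p+1/q\le1+\alpha$ enter. I would invoke the classical Hardy--Littlewood mapping property of the Riemann--Liouville operator: ${_aI_x^\alpha}$ is bounded from $L_q([a,b])$ into $L_s([a,b])$ with $1/s=1/q-\alpha$ when $1/q>\alpha$, and into $L_\infty([a,b])$ (indeed into $C([a,b])$) when $1/q\le\alpha$. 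In either case one can choose $s$ so that its conjugate exponent $s'$ satisfies $1/s'=1-1/q+\alpha\ge 1/p$, which is just the assumed inequality rearranged; hence $|g|\in L_p([a,b])\subseteq L_{s'}([a,b])$ while ${_aI_x^\alpha}|f|\in L_s([a,b])$, and H\"older's inequality makes the product integrable. An alternative, self-contained route skips the Hardy--Littlewood theorem and bounds the double integral directly, writing $(x-t)^{\alpha-1}$ as a product of three powers and applying H\"older with exponents $p$, $q$ and $r$ fixed by $1/r=1/p+1/q-1$.

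The step I expect to be delicate is the borderline case $1/p+1/q=1+\alpha$, where the $L_q\to L_s$ estimate degenerates to weak type (for instance at $q=1$) and H\"older no longer yields $L_1$ integrability for free; there one must work more carefully on the finite interval $[a,b]$ --- using the sharp strong-type Hardy--Littlewood--Sobolev bound when $q>1$, or a direct kernel estimate on $\Delta$ when $q=1$. Everything else is routine: by linearity of the operators (already noted in the text) I may assume $f,g\ge0$ while checking the Tonelli hypothesis, and once absolute convergence is in hand the interchange of integrals and the identification of ${_xI_b^\alpha}g$ are immediate.
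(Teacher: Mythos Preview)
The paper does not supply its own proof of this lemma: it is quoted as a classical result with a citation to Love and Young (1938), and the only argument given in Section~\ref{sec2} is for the \emph{next} lemma on Caputo integration by parts. So there is nothing in the text to compare your proposal against line by line.

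That said, your outline is the standard route and is essentially how the result is proved in the cited source: write the left-hand side as a double integral over the triangle $\{a\le t\le x\le b\}$, verify absolute integrability of $(x-t)^{\alpha-1}|g(x)||f(t)|$ under the exponent condition, and swap the order of integration by Fubini--Tonelli to recover ${_xI_b^\alpha}g$. Your identification of the borderline $1/p+1/q=1+\alpha$ as the only delicate point is accurate; Love and Young handle it by a direct kernel estimate rather than by invoking the Hardy--Littlewood mapping theorem, which, as you note, degenerates to weak type at the endpoint. Either of the two justifications you sketch (Hardy--Littlewood plus H\"older, or the three-exponent H\"older splitting of the kernel) is adequate away from the endpoint, and your caveat about needing extra care at equality is exactly right.
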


\begin{lemma}\label{byparts}[\textrm{cf.} \citet{book:Klimek}]
Let $0<\alpha<1$. If $f,g\in AC([a,b])$, then
\begin{equation}\label{ip}
\begin{split}
\int_{a}^{b}  g(x) \, {^C_aD_x^\alpha}f(x)dx &=\left.f(x){_x
I_b^{1-\alpha}} g(x)\right|^{x=b}_{x=a}+\int_a^b f(x){_x D_b^\alpha}
g(x)dx,\\
\int_{a}^{b}  g(x) \, {^C_xD_b^\alpha}f(x)dx &=\left.-f(x){_a
I_x^{1-\alpha}} g(x)\right|^{x=b}_{x=a}+\int_a^b f(x){_a D_x^\alpha}
g(x)dx.
\end{split}
\end{equation}
\end{lemma}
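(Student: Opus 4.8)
\medskip
\noindent\emph{Proof idea.} The plan is to reduce the Caputo integration-by-parts formula to the Riemann--Liouville version already recorded in~\eqref{ipi}, composed with an ordinary integration by parts. I shall carry out the argument for the first identity in~\eqref{ip}; the second is obtained by the mirror reasoning, interchanging the roles of the left and right operators.

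First I would use the representation ${^C_aD_x^\alpha}f={_aI_x^{1-\alpha}}f'$ from~\eqref{CFD1}, which is legitimate since $f\in AC([a,b])$ forces $f'\in L_1([a,b])$; hence
\[
\int_a^b g(x)\,{^C_aD_x^\alpha}f(x)\,dx=\int_a^b g(x)\,{_aI_x^{1-\alpha}}f'(x)\,dx .
\]
Next I apply the lemma on fractional integration by parts, equation~\eqref{ipi}, with $\alpha$ there replaced by $1-\alpha\in(0,1)$, taking $g$ as its ``$g$'' and $f'$ as its ``$f$''. The hypotheses are met: $g\in AC([a,b])$ is bounded and $f'\in L_1([a,b])$, so one may choose $q=1$ and any $p\ge 1/(1-\alpha)$, whence $1/p+1/q\le 1+(1-\alpha)$. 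This yields
\[
\int_a^b g(x)\,{_aI_x^{1-\alpha}}f'(x)\,dx=\int_a^b f'(x)\,{_xI_b^{1-\alpha}}g(x)\,dx .
\]

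Then I would integrate by parts in the ordinary sense in the last integral. For this the function $x\mapsto{_xI_b^{1-\alpha}}g(x)$ must be absolutely continuous on $[a,b]$ --- which is precisely the hypothesis under which~\eqref{RLFD2} makes sense, and it does hold here because the Riemann--Liouville integral ${_xI_b^{1-\alpha}}$ maps $AC([a,b])$ into itself: writing $g(x)=g(b)-{_xI_b^{1}}g'(x)$ and using the semigroup property one gets ${_xI_b^{1-\alpha}}g=g(b)\,{_xI_b^{1-\alpha}}[1]-{_xI_b^{2-\alpha}}g'$, a sum of ordinary primitives of $L_1$ functions. Integration by parts then gives
\[
\int_a^b f'(x)\,{_xI_b^{1-\alpha}}g(x)\,dx=\Bigl.f(x)\,{_xI_b^{1-\alpha}}g(x)\Bigr|_{x=a}^{x=b}-\int_a^b f(x)\,\frac{d}{dx}\!\left({_xI_b^{1-\alpha}}g(x)\right)dx .
\]
By the definition~\eqref{RLFD2} of the right Riemann--Liouville derivative, $\frac{d}{dx}{_xI_b^{1-\alpha}}g=-{_xD_b^\alpha}g$, so the remaining integral equals $\int_a^b f(x)\,{_xD_b^\alpha}g(x)\,dx$; chaining the three displayed equalities produces the first line of~\eqref{ip}. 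For the second line I would start instead from ${^C_xD_b^\alpha}f=-{_xI_b^{1-\alpha}}f'$, apply~\eqref{ipi} to transfer the integral onto ${_aI_x^{1-\alpha}}g$, integrate by parts, and invoke~\eqref{RLFD1} (so $\frac{d}{dx}{_aI_x^{1-\alpha}}g={_aD_x^\alpha}g$) in place of~\eqref{RLFD2}.

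I expect the only genuine obstacle to be the regularity bookkeeping: checking, under the bare hypothesis $f,g\in AC([a,b])$, that the fractional integration-by-parts lemma is applicable at order $1-\alpha$ and --- the more delicate point --- that ${_xI_b^{1-\alpha}}g$ (resp.\ ${_aI_x^{1-\alpha}}g$) is absolutely continuous, so that its pointwise derivative exists a.e., coincides with $-{_xD_b^\alpha}g$ (resp.\ ${_aD_x^\alpha}g$), and the fundamental theorem of calculus is legitimate for the product $f\cdot{_xI_b^{1-\alpha}}g$ (resp.\ $f\cdot{_aI_x^{1-\alpha}}g$) generating the boundary term. Everything else is the routine chain \eqref{CFD1} $\rightarrow$ \eqref{ipi} $\rightarrow$ ordinary integration by parts $\rightarrow$ \eqref{RLFD2}/\eqref{RLFD1}.
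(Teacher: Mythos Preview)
Your proposal is correct and follows precisely the route the paper itself sketches: the paper's proof is the one-line remark ``Formulas could be derived using equations \eqref{RLFD1}--\eqref{CFD2}, the identity \eqref{ipi} and performing standard integration by parts,'' and you have carried out exactly this chain (Caputo $=$ fractional integral of the ordinary derivative $\to$ \eqref{ipi} $\to$ classical integration by parts $\to$ definition of the Riemann--Liouville derivative), supplying in addition the regularity verifications the paper leaves implicit.
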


\begin{proof}
Formulas could be derived using equations \eqref{RLFD1}--\eqref{CFD2},
the identity \eqref{ipi} and performing standard integration by parts.
\end{proof}

Partial fractional integrals and derivatives are a natural generalization of the corresponding one-dimensional fractional integrals and derivatives, being taken with respect to one or several variables. For $(x_1,\ldots,x_n)$, $(\alpha_1,\ldots,\alpha_n)$, where $0<\alpha_i<1$, $i=1,\ldots,n$ and $[a_1,b_1]\times \ldots \times [a_n,b_n]$, the partial Riemann--Liouville fractional integrals of order $\alpha_k$ with respect to $x_k$ are defined by

\begin{equation*}
{_{a_{k}}I_{x_{k}}^{{\alpha}_k}}f(x_1,\ldots,x_n)=\frac{1}{\Gamma(\alpha_k)}\int_{a_k}^{x_k}(x_k-t_k)^{\alpha_k-1}f(x_1,\ldots,x_{k-1},t_k,x_{k+1},\ldots,x_n)dt_k,\quad x_k>a_k,
\end{equation*}
\begin{equation*}
{_{x_{k}}I_{b_{k}}^{{\alpha}_k}}f(x_1,\ldots,x_n)=\frac{1}{\Gamma(\alpha_k)}\int_{x_k}^{b_k}(t_k-x_k)^{\alpha_k-1}f(x_1,\ldots,x_{k-1},t_k,x_{k+1},\ldots,x_n)dt_k,\quad x_k<b_k.
\end{equation*}

Partial Riemann--Liouville and Caputo derivatives are defined by:
\begin{equation*}
{_{a_{k}}D_{x_{k}}^{{\alpha}_k}}f(x_1,\ldots,x_n)=\frac{1}{\Gamma(1-\alpha_k)}\frac{\partial}{\partial x_k}\int_{a_k}^{x_k}(x_k-t_k)^{-\alpha_k}f(x_1,\ldots,x_{k-1},t_k,x_{k+1},\ldots,x_n)dt_k,
\end{equation*}
\begin{equation*}
{_{x_{k}}D_{b_{k}}^{{\alpha}_k}}f(x_1,\ldots,x_n)=-\frac{1}{\Gamma(1-\alpha_k)}\frac{\partial}{\partial x_k}\int_{x_k}^{b_k}(t_k-x_k)^{-\alpha_k}f(x_1,\ldots,x_{k-1},t_k,x_{k+1},\ldots,x_n)dt_k,
\end{equation*}

\begin{equation*}
{^C_{a_{k}}D_{x_{k}}^{{\alpha}_k}}f(x_1,\ldots,x_n)=\frac{1}{\Gamma(1-\alpha_k)}\int_{a_k}^{x_k}(x_k-t_k)^{-\alpha_k}\frac{\partial}{\partial t_k}f(x_1,\ldots,x_{k-1},t_k,x_{k+1},\ldots,x_n)dt_k,
\end{equation*}
\begin{equation*}
{^C_{x_{k}}D_{b_{k}}^{{\alpha}_k}}f(x_1,\ldots,x_n)=-\frac{1}{\Gamma(1-\alpha_k)}\int_{x_k}^{b_k}(t_k-x_k)^{-\alpha_k}\frac{\partial}{\partial t_k}f(x_1,\ldots,x_{k-1},t_k,x_{k+1},\ldots,x_n)dt_k.
\end{equation*}

\section{Main Results}\label{main}

In this Section we formulate and prove the fractional second Noether type theorem, for single (Subsection~\ref{main:single}) and multiple (Subsection~\ref{main:multiple}) integral problems.

\subsection{Single Integral Case}\label{main:single}

Consider a system characterized by a set of functions
\begin{equation}\label{system:1}
x^i(t), \quad i=1,\ldots,n,
\end{equation}
depending on time $t$. We can simplify the notation by interpreting \eqref{system:1} as a vector function $x=(x^1,\ldots,x^n)$. Define the action functional in the form
\begin{equation}\label{funct1}
\mathcal{J}(x)=\int_a^b L(t,x(t),\, {_a^C D_t^\alpha} x(t))dt,
\end{equation}
where:
\begin{itemize}
\item[(i)] ${_a^C D_t^\alpha} x(t):=\left({_a^C D_t^{\alpha_1}} x^1(t),\ldots,{_a^C D_t^{\alpha_n}} x^n(t)\right)$, $0<\alpha_i\leq 1$, $i=1,\ldots, n$;
\item[(ii)] $x\in C^1([a,b],\R^n)$;
\item[(iii)] $L\in C^1([a,b]\times\mathbb{R}^{2n},\mathbb{R})$;
\item[(iv)] $t\rightarrow \frac{\partial L}{\partial{_a^C D_t^{\alpha_k}} x^k }\in AC([a,b])$ for every  $x\in C^1([a,b],\R^n)$, $k=1,\ldots,n$.
\end{itemize}

We define the admissible set of functions $A([a,b])$ by
$$A([a,b]):=\{x\in C^1([a,b],\R^n):x(a)=x_a,x(b)=x_b, x_a,x_b\in \R^n \}.$$

\begin{theorem}[\textbf{\citet{agrawalCap}}]
A necessary condition for the function $x\in A([a,b])$ to provide an extremum for the functional \eqref{funct1} is that its components satisfy the $n$ fractional equations
\begin{equation*}
\frac{\partial L}{\partial x^k}+{_t D_b^{\alpha_k}}
\frac{\partial L}{\partial {_a^C D_t^{\alpha_k}} x^k }=0,\quad k=1,\ldots, n
\end{equation*}
for $t\in[a,b].$
\end{theorem}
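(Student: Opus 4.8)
The plan is to run the classical first–variation argument, with the ordinary derivative replaced by the Caputo derivative and the ordinary integration by parts replaced by Lemma~\ref{byparts}. First I would fix $x\in A([a,b])$ and embed it in a one–parameter family $x+\varepsilon h$, where $h=(h^1,\dots,h^n)\in C^1([a,b],\R^n)$ is an arbitrary variation subject to $h(a)=h(b)=0$, so that $x+\varepsilon h\in A([a,b])$ for every $\varepsilon$. Since $L\in C^1$ and ${_a^CD_t^{\alpha_k}}$ is linear, the map $\varepsilon\mapsto \mathcal{J}(x+\varepsilon h)$ is differentiable and, by differentiating under the integral sign (justified by the continuity hypotheses (ii)--(iii) on $[a,b]$, which is compact),
\begin{equation*}
\left.\frac{d}{d\varepsilon}\mathcal{J}(x+\varepsilon h)\right|_{\varepsilon=0}
=\int_a^b\sum_{k=1}^n\left(\frac{\partial L}{\partial x^k}\,h^k
+\frac{\partial L}{\partial {_a^CD_t^{\alpha_k}}x^k}\,{_a^CD_t^{\alpha_k}}h^k\right)dt .
\end{equation*}
Because $x$ is an extremizer, this expression vanishes for every admissible $h$.

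Next I would remove the fractional derivative from $h^k$ using the first formula in Lemma~\ref{byparts}, with the roles $f=h^k$ and $g=\frac{\partial L}{\partial {_a^CD_t^{\alpha_k}}x^k}$; hypothesis (iv) guarantees $g\in AC([a,b])$, so the lemma applies and produces
\begin{equation*}
\int_a^b \frac{\partial L}{\partial {_a^CD_t^{\alpha_k}}x^k}\,{_a^CD_t^{\alpha_k}}h^k\,dt
=\left. h^k(t)\,{_tI_b^{1-\alpha_k}}\frac{\partial L}{\partial {_a^CD_t^{\alpha_k}}x^k}\right|_{t=a}^{t=b}
+\int_a^b h^k(t)\,{_tD_b^{\alpha_k}}\frac{\partial L}{\partial {_a^CD_t^{\alpha_k}}x^k}\,dt .
\end{equation*}
The boundary term is zero because $h^k(a)=h^k(b)=0$. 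Substituting back, the stationarity condition becomes
\begin{equation*}
\int_a^b\sum_{k=1}^n\left(\frac{\partial L}{\partial x^k}+{_tD_b^{\alpha_k}}\frac{\partial L}{\partial {_a^CD_t^{\alpha_k}}x^k}\right)h^k(t)\,dt=0
\end{equation*}
for all admissible variations $h$.

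Finally I would invoke the fundamental lemma of the calculus of variations: since the integrand above is continuous and the identity holds for every $h\in C^1$ vanishing at the endpoints, one may choose variations supported near any interior point and with only one nonzero component to conclude that each coefficient vanishes identically on $[a,b]$, which is exactly the asserted system
$\frac{\partial L}{\partial x^k}+{_tD_b^{\alpha_k}}\frac{\partial L}{\partial {_a^CD_t^{\alpha_k}}x^k}=0$, $k=1,\dots,n$.

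The only genuinely delicate point is the regularity bookkeeping needed to apply Lemma~\ref{byparts}: one must know that $t\mapsto\frac{\partial L}{\partial {_a^CD_t^{\alpha_k}}x^k}$ is absolutely continuous (so that its right Riemann--Liouville derivative exists and the integration–by–parts formula is valid) and that the right fractional integral appearing in the boundary term is finite at $a$ and $b$; both are secured by hypothesis (iv) together with the $C^1$ assumptions. Everything else — differentiation under the integral, linearity of the Caputo operator, and the concluding application of the fundamental lemma — is routine once this is in place.
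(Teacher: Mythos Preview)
Your argument is correct and is exactly the standard first--variation proof of the fractional Euler--Lagrange equations. Note, however, that the paper does not supply its own proof of this statement: it is quoted as a known result from \citet{agrawalCap}, so there is no in-paper proof to compare against. What you have written is essentially Agrawal's original derivation, carried out with the regularity hypotheses (ii)--(iv) and Lemma~\ref{byparts} that the present paper provides.
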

Define
$$E_k^f(L):=\frac{\partial L}{\partial x^k}+{_t D_b^{\alpha_k}}
\frac{\partial L}{\partial {_a^C D_t^{\alpha_k}} x^k }.$$
We shall call $E_k^f(L)$ the fractional Lagrange expressions. \\
The invariance transformations that we shall consider are infinitesimal transformations that depend upon arbitrary functions and
their fractional derivatives in the sense of Caputo. Let

\begin{equation}
\label{trans:frac}
\begin{cases}
\bar{t} = t ,\\
\bar{x}^k(t) = x^k(t) + T^{k1}(p_1(t))+\cdots+ T^{kr}(p_r(t)),\quad k=1,\ldots,n,\\
\end{cases}
\end{equation}

where $T^{ks}$ are linear fractional differential operators and $p_s$, $s=1,\ldots,r$ are $r$ arbitrary, independent $C^1$ functions defined on $[a,b]$. Then, we consider four types of fractional differential operators:
\begin{itemize}

\item [I.] Operator of the first kind
\begin{equation*}\label{op1}
T^{ks}=T^{ks}_1:=a_0^{ks}(t)+a_1^{ks}(t){_a^C D_t^{\beta_{ks1}}}+\cdots+a_l^{ks}(t){_a^C D_t^{\beta_{ksl}}},\quad 0< \beta_{ksi}\leq 1,
\end{equation*}
and ${_a^C D_t^{\beta_{ksi}}}p_s\in C^1([a,b])$, $a_i^{ks}\in C^1([a,b],\R)$, $s=1,\ldots,r$, $i=1,\ldots,l$. \\

\item [II.] Operator of the second kind
\begin{equation*}\label{op2}
T^{ks}=T^{ks}_2:=a_0^{ks}(t)+a_1^{ks}(t){^C_tD_b^{\beta_{ks1}}}+\cdots+a_l^{ks}(t){^C_tD_b^{\beta_{ksl}}},\quad 0< \beta_{ksi}\leq 1,
\end{equation*}
and ${_t^C D_b^{\beta_{ksi}}}p_s\in C^1([a,b])$, $a_i^{ks}\in C^1([a,b],\R)$, $s=1,\ldots,r$, $i=1,\ldots,l$.\\

\item [III.]  Operator of the third kind
\begin{equation*}\label{op3}
T^{ks}=T^{ks}_3:=a_0^{ks}(t)+a_1^{ks}(t){_a^C D_t^{\beta_{ks}}}+a_2^{ks}(t){_a^C D_t^{1+\beta_{ks}}}+\cdots+a_l^{ks}(t){_a^C D_t^{l-1+\beta_{ks}}},\quad 0< \beta_{ks}\leq 1,
\end{equation*}
$p_s\in C^l([a,b])$ and ${^C_aD_t^{1+\beta_{ks}}}p_s\in C^1([a,b])$, $a_i^{ks}\in C^1([a,b],\R)$, $s=1,\ldots,r$.\\

\item [IV.] Operator of the fourth kind
\begin{equation*}\label{op3}
T^{ks}=T^{ks}_4:=a_0^{ks}(t)+a_1^{ks}(t){^C_tD_b^{\beta_{ks}}}+a_1^{ks}(t){^C_tD_b^{1+\beta_{ks}}}+\cdots+a_l^{ks}(t){^C_tD_b^{l-1+\beta_{ks}}},\quad 0<\beta_{ks}\leq 1,
\end{equation*}
$p_s\in C^l([a,b])$ and ${^C_tD_b^{1+\beta_{ks}}}p_s\in C^1([a,b])$, $a_i^{ks}\in C^1([a,b],\R)$, $s=1,\ldots,r$.\\
\end{itemize}

Now we define the formal adjoint operator $\tilde{T}^{ks}$ of a fractional differential operator $T^{ks}$ similar in spirit to the classical case, by the integration by parts:
\begin{equation}\label{proof:3}
\int_a^bqT^{ks}(p_s)dt=\int_a^bp_s\tilde{T}^{ks}(q)dt+[\cdot]_{t=a}^{t=b},
\end{equation}
where $[\cdot]_{t=a}^{t=b}$ represents the boundary terms. Therefore, the adjoints of $T^{ks}_i$, $i=1,\ldots,4$, are given by expressions:

\begin{equation}\label{a:op:1}
\int_a^bqT_1^{ks}(p_s)dt=\int_a^bp_s\tilde{T}_1^{ks}(q)=\int_a^bp_s\left(a_0^{ks}q+\sum_{i=1}^{l}{_tD_b^{\beta_{ksi}}}(a_{i}^{ks}q)\right)dt+[\cdot]_{t=a}^{t=b},
\end{equation}

\begin{equation}\label{a:op:2}
\int_a^bqT_2^{ks}(p_s)dt=\int_a^bp_s\tilde{T}_2^{ks}(q)=\int_a^bp_s\left(a_0^{ks}q+\sum_{i=1}^{l}{_aD_t^{\beta_{ksi}}}(a_{i}^{ks}q)\right)dt+[\cdot]_{t=a}^{t=b},
\end{equation}

\begin{equation}\label{a:op:3}
\int_a^bqT_3^{ks}(p_s)dt=\int_a^bp_s\tilde{T}_3^{ks}(q)=\int_a^bp_s\left(a_0^{ks}q+\sum_{i=0}^{l-1}{_t D_b^{i+\beta_{ks}}}(a_{i+1}^{ks}q)\right)dt+[\cdot]_{t=a}^{t=b},
\end{equation}

\begin{equation}\label{a:op:4}
\int_a^bqT_4^{ks}(p_s)dt=\int_a^bp_s\tilde{T}_4^{ks}(q)=\int_a^bp_s\left(a_0^{ks}q+\sum_{i=0}^{l-1}{_a D_t^{i+\beta_{ks}}}(a_{i+1}^{ks}q)\right)dt+[\cdot]_{t=a}^{t=b}.
\end{equation}

Note that, if $p_s(a)=p_s(b)=0$, then  boundary terms in \eqref{a:op:1} and \eqref{a:op:2} vanish. Similarly, if  $p_s(a)=p^{(1)}_s(a)=\ldots=p^{(l-1)}_s(a)=0$ and $p_s(b)=p^{(1)}_s(b)=\ldots=p^{(l-1)}_s(b)=0$, then boundary terms in \eqref{a:op:3} and \eqref{a:op:4} vanish.

Now we define invariance.
\begin{definition}\label{invar:frac}
The functional \eqref{funct1} is invariant under transformations \eqref{trans:frac} if and only if for all $x\in C^1([a,b],\R^n)$ we have
$$\int_a^b L(t,\bar{x}(t),\, {_a^C D_t^\alpha} \bar{x}(t))dt=\int_a^b L(t,x(t),\, {_a^C D_t^\alpha} x(t))dt.$$
\end{definition}

\begin{theorem}
\label{theo:tnoe} If functional \eqref{funct1} is invariant under
transformations \eqref{trans:frac}, then there exist $r$ identities of the form
\begin{equation}\label{frac:iden}
\sum_{k=1}^{n}\tilde{T}^{ks}\left(E_k^f(L)\right)=0, \quad s=1,\ldots,r,
\end{equation}
where $\tilde{T}^{ks}$ is the adjoint of $T^{ks}$.
\end{theorem}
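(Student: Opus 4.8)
The plan is to imitate the classical derivation of the second Noether theorem, using the fractional integration by parts of Lemma~\ref{byparts} in place of the ordinary one and then the adjoint identities \eqref{a:op:1}--\eqref{a:op:4}. First, since each $T^{ks}$ is linear, for every $\varepsilon\in\R$ the functions $\varepsilon p_1,\ldots,\varepsilon p_r$ are again an admissible family of arbitrary functions, producing the transformation with $k$-th component $\bar x^k=x^k+\varepsilon\sum_{s=1}^r T^{ks}(p_s)$. By Definition~\ref{invar:frac} and the linearity of the Caputo operators, the map
\begin{equation*}
\varepsilon\longmapsto\int_a^b L\!\left(t,\ x(t)+\varepsilon\textstyle\sum_{s}T^{\bullet s}(p_s)(t),\ {_a^C D_t^{\alpha}}x(t)+\varepsilon\textstyle\sum_{s}{_a^C D_t^{\alpha}}T^{\bullet s}(p_s)(t)\right)dt,
\end{equation*}
where $T^{\bullet s}(p_s)$ denotes the vector with components $T^{ks}(p_s)$, is constant in $\varepsilon$. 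As $L$ and all arguments are $C^1$ and $[a,b]$ is compact, I may differentiate under the integral sign; equating the derivative at $\varepsilon=0$ to zero gives
\begin{equation*}
\int_a^b\sum_{k=1}^n\sum_{s=1}^r\left[\frac{\partial L}{\partial x^k}\,T^{ks}(p_s)+\frac{\partial L}{\partial\,{_a^C D_t^{\alpha_k}}x^k}\,{_a^C D_t^{\alpha_k}}T^{ks}(p_s)\right]dt=0,
\end{equation*}
all derivatives of $L$ being evaluated along $\bigl(t,x(t),{_a^C D_t^{\alpha}}x(t)\bigr)$.

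Next, fixing $k,s$ and writing $h:=T^{ks}(p_s)$, the regularity imposed on the coefficients $a_i^{ks}$ and on the fractional derivatives of $p_s$ ensures $h\in C^1([a,b])\subset AC([a,b])$, while hypothesis~(iv) gives $\partial L/\partial\,{_a^C D_t^{\alpha_k}}x^k\in AC([a,b])$; hence Lemma~\ref{byparts} applies to the second term:
\begin{equation*}
\int_a^b\frac{\partial L}{\partial\,{_a^C D_t^{\alpha_k}}x^k}\,{_a^C D_t^{\alpha_k}}h\,dt=\left[h\,{_tI_b^{1-\alpha_k}}\frac{\partial L}{\partial\,{_a^C D_t^{\alpha_k}}x^k}\right]_{t=a}^{t=b}+\int_a^b h\,{_tD_b^{\alpha_k}}\frac{\partial L}{\partial\,{_a^C D_t^{\alpha_k}}x^k}\,dt.
\end{equation*}
Substituting this back and recognising $E_k^f(L)=\frac{\partial L}{\partial x^k}+{_tD_b^{\alpha_k}}\frac{\partial L}{\partial\,{_a^C D_t^{\alpha_k}}x^k}$, the identity of the first paragraph becomes $\sum_{k=1}^n\sum_{s=1}^r\int_a^b T^{ks}(p_s)\,E_k^f(L)\,dt+B=0$, where $B$ gathers the boundary contributions.

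Then, for each $k,s$ I would apply the adjoint identity appropriate to the kind of $T^{ks}$ --- one of \eqref{a:op:1}--\eqref{a:op:4} --- with $q=E_k^f(L)$, which turns the last relation into
\begin{equation*}
\sum_{s=1}^r\int_a^b p_s\left(\sum_{k=1}^n\tilde{T}^{ks}\bigl(E_k^f(L)\bigr)\right)dt+B'=0,
\end{equation*}
with $B'$ again a sum of boundary terms. Since the $p_s$ are arbitrary, I restrict attention to those vanishing at $t=a$ and $t=b$ (together with $p_s^{(1)},\ldots,p_s^{(l-1)}$ when $T^{ks}$ is of the third or fourth kind); by the remark following \eqref{a:op:4}, and since then $h=T^{ks}(p_s)$ also vanishes at the relevant endpoints, all boundary terms cancel and $B'=0$. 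Testing against each $p_s$ separately (they are independent) and invoking the fundamental lemma of the calculus of variations yields $\sum_{k=1}^n\tilde{T}^{ks}(E_k^f(L))=0$ for $s=1,\ldots,r$.

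The genuinely delicate point I expect is the bookkeeping for the third- and fourth-kind operators: their higher-order Caputo derivatives ${_a^C D_t^{\,i-1+\beta_{ks}}}p_s$ require iterating Lemma~\ref{byparts} with ordinary integration by parts, and the boundary terms then involve derivatives of $p_s$ up to order $l-1$ --- which is exactly why $p_s\in C^l([a,b])$ and the vanishing of the $p_s^{(j)}$ at the endpoints are built into the hypotheses and into the adjoint formulas \eqref{a:op:3}--\eqref{a:op:4}. By contrast, differentiating under the integral sign and the closing appeal to the fundamental lemma are routine given the $C^1$ assumptions.
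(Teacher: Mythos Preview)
Your argument is correct and follows the same route as the paper's own proof: a first-order variation of the invariance condition, fractional integration by parts via Lemma~\ref{byparts} to produce $E_k^f(L)$, passage to the adjoint through \eqref{a:op:1}--\eqref{a:op:4}, elimination of boundary terms by exploiting the arbitrariness of the $p_s$, and the fundamental lemma. Your $\varepsilon$-parametrisation (using linearity of $T^{ks}$) is a slightly cleaner justification of the first-order condition than the paper's direct appeal to ``the Taylor formula''; the only point to tighten is that for first-kind operators $p_s(a)=p_s(b)=0$ alone does not force $T^{ks}_1(p_s)(b)=0$ --- the paper also imposes ${_a^C D_t^{\beta_{ksi}}}p_s|_{t=b}=0$, which you can secure simply by restricting to $p_s$ compactly supported in $(a,b)$.
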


\begin{proof}
We give the proof only for the case $T^{ks}=T^{ks}_1$; other cases can be proved similarly.
By Definition~\ref{invar:frac} we have
\begin{multline*}
0=\int_a^b L(t,\bar{x}(t),\, {_a^C D_t^\alpha} \bar{x}(t))dt-\int_a^b L(t,x(t),\, {_a^C D_t^\alpha} x(t))dt\\
=\int_a^b \left(L(t,\bar{x}(t),\, {_a^C D_t^\alpha} \bar{x}(t))- L(t,x(t),\, {_a^C D_t^\alpha} x(t))\right)dt.
\end{multline*}
Then, by the Taylor formula
\begin{equation}\label{proof:1}
0=\sum_{k=1}^{n}\int_a^b\left[\frac{\partial L}{\partial x^k}T^{ks}_1(p_s)+\frac{\partial L}{\partial {_a^C D_t^{\alpha_k}} x^k }{_a^C D_t^{\alpha_k}}T^{ks}_1(p_s)\right]dt,
\end{equation}
where $T^{ks}_1(p_s)=\sum_{s=1}^{r}T^{ks}_1(p_s).$
The second term in the integrand may be integrated by parts (see the first formula of \eqref{ip}):
\begin{equation}\label{proof:2}
\int_a^b\frac{\partial L}{\partial {_a^C D_t^{\alpha_k}} x^k }{_a^C D_t^{\alpha_k}}T^{ks}_1(p_s)=\left.T^{ks}_1(p_s){_t
I_b^{1-{\alpha_{k}}}}\frac{\partial L}{\partial {_a^C D_t^{\alpha_k}} x^k }\right|^{x=b}_{x=a}+\int_a^b T^{ks}_1(p_s){_tD_b^{\alpha_k}}
\frac{\partial L}{\partial {_a^C D_t^{\alpha_k}} x^k }dt.
\end{equation}
Since $p_s$ are arbitrary, we may choose $p_s$ such that: $p_s(a)=p_s(b)=0$ and ${_a^C D_t^{\beta_{ksi}}}p_s(t)|_{t=b}=0$, $s=1,\ldots,r$, $i=1,\ldots,l$; and if $\beta_{ksi}=1$, then also ${_a^C D_t^{\beta_{ksi}}}p_s(t)|_{t=a}=0$. Therefore, the boundary term in \eqref{proof:2} vanishes and substituting \eqref{proof:2} into \eqref{proof:1} we get
\begin{equation*}\label{proof:4}
0=\sum_{k=1}^{n}\int_a^b\left[\frac{\partial L}{\partial x^k}+{_t D_b^{\alpha_k}}
\frac{\partial L}{\partial {_a^C D_t^{\alpha_k}} x^k }\right]T^{ks}_1(p_s)dt.
\end{equation*}
Using the definition of the adjoint operator $\tilde{T}^{ks}_1$ of a fractional differential operator $T^{ks}_1$, that is, equation \eqref{a:op:1}, we get \begin{equation*}
0=\sum_{k=1}^{n}\int_a^b\sum_{s=1}^{r}\tilde{T}^{ks}_{1}\left(E_k^f(L)\right)p_sdt+[\cdot]_{t=a}^{t=b}.
\end{equation*}
Again appealing to the arbitrariness of $p_s$ we can force the boundary term to vanish, and
finally by the fundamental lemma of calculus of variations we conclude that
\begin{equation*}
\sum_{k=1}^{n}\tilde{T}^{ks}_{1}\left(E_k^f(L)\right)=0, \quad s=1,\ldots,r.
\end{equation*}
\end{proof}

\begin{remark}
Notice that if we put $\beta_{ks}=1$ in transformations of the third or the fourth kind, then we obtain infinitesimal transformations:
\begin{equation*}
\begin{cases}
\bar{t} = t\\
\bar{x}^k(t) = x^k + B^{ks}(p_s) + \ldots\, ,\\
\end{cases}
\end{equation*}
where
\begin{equation*}
B^{ks}=b_0^{ks}(t)+b_1^{ks}(t)\frac{d}{dt}+b_2^{ks}(t)\frac{d^2}{dt^2}+\cdots+b_l^{ks}(t)\frac{d^l}{dt^l},\quad k=1,\ldots,n.
\end{equation*}
In this case the adjoint operator $\tilde{B}^{ks}$ of the differential operator $B^{ks}$ is given by
\begin{equation*}
\tilde{B}^{ks}(q)=b_0^{ks}q+\sum_{i=1}^{l}(-1)^i\frac{d^i}{dt^i}(b_{i}^{ks}q),\quad k=1,\ldots,n
\end{equation*}
and the identities \eqref{frac:iden} take the form
\begin{equation*}
\sum_{k=1}^{n}b_0^{ks}(E_k(L))+\sum_{k=1}^{n}\sum_{i=1}^{l}(-1)^i\frac{d^i}{dt^i}\left(b_{i}^{ks}E_k(L)\right)=0,\quad s=1,\ldots,r,
\end{equation*}
which are exactly the Noether identities (see \citet{brading1,logan1}).
\end{remark}

\begin{remark}
The fractional differential operators $T^{ks}_1$, $T^{ks}_2$, $T^{ks}_3$ and $T^{ks}_4$ can of course be combined, that is, we can consider infinitesimal transformations that depend upon arbitrary functions and their fractional derivatives in the sense of Caputo: left and right with various orders.
\end{remark}

\subsection{Multiple Integral Case}\label{main:multiple}

Consider a system characterized by a set of functions
\begin{equation}\label{system}
u^j(t,x_1,\ldots,x_m), \quad j=1,\ldots,n,
\end{equation}
depending on time $t$ and the space coordinates $x_1,\ldots,x_m$. We can simplify the notation by interpreting \eqref{system} as a vector function $u=(u^1,\ldots,u^n)$ and writing $t=x_0$, $x=(x_0,x_1,\ldots,x_m)$, $dx=dx_0dx_1\cdots dx_m$. Then \eqref{system} becomes simply $u(x)$ and is called a vector field. Define the action functional in the form
\begin{equation}\label{problem}
\mathcal{J}(u)=\int_{\Omega}\mathcal{L}(x,u,{^{C}\nabla^{\alpha}}u)dx,
\end{equation}
where $\Omega=R\times [a_0,b_0]$, $R=[a_1,b_1]\times \ldots \times [a_m,b_m]$, and ${^{C}\nabla^{\alpha}}$ is the operator
\begin{equation*}
({^{C}_{a_0}D_{x_{0}}^{{\alpha}_0}},{^{C}_{a_{1}}D_{x_{1}}^{{\alpha}_1}},\cdots, {^{C}_{a_{m}}D_{x_{m}}^{{\alpha}_m}}),
\end{equation*}
where $\alpha=(\alpha_0,\alpha_1,\ldots,\alpha_m)$, $0<\alpha_i\leq1$, $i=0,\ldots,m$. The function $\mathcal{L}(x,u,{^{C}\nabla^{\alpha}}u)$ is called the fractional Lagrangian density of the field.
We assume that:
\begin{itemize}
\item[(i)] $u^j\in C^1(\Omega,\mathbb{R})$,  $j=1,\ldots, n$;
\item[(ii)]
$\mathcal{L}\in C^1(\mathbb{R}^m\times\mathbb{R}^n\times\mathbb{R}^{n(m+1)}; \mathbb{R})$;
\item[(iii)] $x \rightarrow \frac{\partial \mathcal{L}}{\partial {^{C}_{a_{i}}D_{x_{i}}^{{\alpha}_i}}u^j}$ for every $u^j\in C^1(\Omega,\mathbb{R})$ are $C^1$ functions, $j=1,\ldots,n$, $i=0,\ldots,m$.
\end{itemize}

Define the admissible set of functions $A(\Omega)$ by
$$A(\Omega):=\{u: \Omega \rightarrow \mathbb{R}^n:u(x)=\varphi(x)\quad \mbox{for}\quad x\in \partial \Omega\},$$
where $\varphi:\partial \Omega \rightarrow \mathbb{R}^{n}$ is a given function.

Applying the principle of stationary action to \eqref{problem} we obtain the multidimensional fractional Euler--Lagrange equations for the field.
\begin{theorem}\label{th:E-L:m}[\textbf{\textrm{cf.} \citet{Cresson}}]
A necessary condition for the function $u\in A(\Omega)$ to provide an extremum for the action functional \eqref{problem} it that its components satisfy the $n$ multidimensional fractional Euler--Lagrange equations:
\begin{equation*}\label{E-L}
\frac{\partial \mathcal{L}}{\partial u^j}+\sum_{i=0}^m {_{x_i}D_{b_{i}}^{{\alpha}_i}}\frac{\partial \mathcal{L}}{\partial {^{C}_{a_{i}}D_{x_{i}}^{{\alpha}_i}}u^j}=0,\quad j=1,\ldots,n.
\end{equation*}
\end{theorem}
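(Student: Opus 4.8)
The plan is to follow the classical derivation of the Euler--Lagrange equations, replacing ordinary integration by parts by its fractional counterpart from Lemma~\ref{byparts}. Fix $u\in A(\Omega)$ and let $\eta=(\eta^1,\ldots,\eta^n)\in C^1(\Omega,\R^n)$ be an arbitrary variation with $\eta(x)=0$ for $x\in\partial\Omega$; then $u+\varepsilon\eta\in A(\Omega)$ for every $\varepsilon\in\R$. Since $u$ provides an extremum, the real function $\varepsilon\mapsto\mathcal{J}(u+\varepsilon\eta)$ has a critical point at $\varepsilon=0$. The $C^1$ hypotheses (i)--(iii) on $u$ and $\mathcal{L}$ justify differentiation under the integral sign, and the linearity of the Caputo operators ${^{C}_{a_{i}}D_{x_{i}}^{{\alpha}_i}}$ gives
\begin{equation*}
0=\frac{d}{d\varepsilon}\mathcal{J}(u+\varepsilon\eta)\Big|_{\varepsilon=0}=\int_{\Omega}\sum_{j=1}^{n}\left[\frac{\partial\mathcal{L}}{\partial u^j}\,\eta^j+\sum_{i=0}^{m}\frac{\partial\mathcal{L}}{\partial {^{C}_{a_{i}}D_{x_{i}}^{{\alpha}_i}}u^j}\,{^{C}_{a_{i}}D_{x_{i}}^{{\alpha}_i}}\eta^j\right]dx.
\end{equation*}

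Next I would integrate by parts each term involving a Caputo derivative. Writing $dx=dx_i\,d\widehat{x}_i$, where $d\widehat{x}_i$ denotes integration over the remaining variables, Fubini's theorem allows us, for fixed values of the other coordinates, to apply the first formula of~\eqref{ip} in the single variable $x_i$ on $[a_i,b_i]$ with $f=\eta^j$ and $g=\partial\mathcal{L}/\partial {^{C}_{a_{i}}D_{x_{i}}^{{\alpha}_i}}u^j$; by assumption (iii) the function $g$ is $C^1$, hence absolutely continuous in $x_i$, and $\eta^j$ is absolutely continuous as well, so Lemma~\ref{byparts} is applicable. The boundary contribution is $\eta^j\,{_{x_{i}}I_{b_{i}}^{1-{\alpha}_i}}g$ evaluated at $x_i=a_i$ and $x_i=b_i$, and it vanishes because $\eta$ vanishes on all of $\partial\Omega$, in particular on the faces $x_i=a_i$ and $x_i=b_i$. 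Integrating the resulting identity over the remaining variables and summing over $i$ we obtain
\begin{equation*}
0=\int_{\Omega}\sum_{j=1}^{n}\left[\frac{\partial\mathcal{L}}{\partial u^j}+\sum_{i=0}^{m}{_{x_{i}}D_{b_{i}}^{{\alpha}_i}}\frac{\partial\mathcal{L}}{\partial {^{C}_{a_{i}}D_{x_{i}}^{{\alpha}_i}}u^j}\right]\eta^j\,dx.
\end{equation*}

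Finally, since the $\eta^j$ are arbitrary $C^1$ functions that vanish on $\partial\Omega$ and may be chosen independently of one another, the multidimensional fundamental lemma of the calculus of variations forces each bracket to vanish identically on $\Omega$, which is precisely the assertion. The one genuinely delicate point is the combination of Fubini's theorem with the slice-wise use of the one-dimensional fractional integration-by-parts formula: one must check that assumptions (i)--(iii) are exactly what guarantees that, for each $i$ and (almost) every choice of the frozen variables, the maps $x_i\mapsto\eta^j$ and $x_i\mapsto\partial\mathcal{L}/\partial {^{C}_{a_{i}}D_{x_{i}}^{{\alpha}_i}}u^j$ belong to $AC([a_i,b_i])$, so that Lemma~\ref{byparts} is legitimately applicable and the partial Riemann--Liouville derivative ${_{x_{i}}D_{b_{i}}^{{\alpha}_i}}$ produced by it is well defined; the remaining manipulations are routine.
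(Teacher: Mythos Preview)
The paper does not actually prove this theorem; it is merely stated with the citation ``cf.\ \citet{Cresson}'' and no proof is given. Your derivation is the standard one---first variation, slice-wise fractional integration by parts via Lemma~\ref{byparts} combined with Fubini, vanishing boundary terms, and the fundamental lemma---and it is correct; indeed the same mechanism (Fubini plus formula~\eqref{ip}) is exactly what the paper invokes later in the proof of Theorem~\ref{theo:tnoe:m}, so your argument is fully in the spirit of the paper's methods.
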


As before we define
$$E_j^f(\mathcal{L}):=\frac{\partial \mathcal{L}}{\partial u^j}+\sum_{i=0}^n {_{x_i}D_{b_{i}}^{{\alpha}_i}}\frac{\partial \mathcal{L}}{\partial {^{C}_{a_{i}}D_{x_{i}}^{{\alpha}_i}}u^j},$$
which are called the fractional Lagrange expressions. \\
We shall study infinitesimal transformations that depend upon arbitrary functions of independent variables and their partial fractional derivatives in the sense of Caputo. Let

\begin{equation}
\label{trans:frac:m}
\begin{cases}
\bar{x} = x ,\\
\bar{u}^j(x) = u^j(x) + T^{j1}(p_1(x))+\cdots+ T^{jr}(p_r(x)),\quad j=1,\ldots,n,\\
\end{cases}
\end{equation}

where $T^{js}$ are linear fractional differential operators and $p_s$, $s=1,\ldots,r$ are the $r$ arbitrary, independent $C^1$ functions defined on $\Omega$. Then, we consider two types of fractional differential operators:
\begin{itemize}

\item [I.] Operator of the first kind
\begin{equation*}\label{op1:m}
T^{js}=T^{js}_1:=c^{js}(x)+\sum_{i=0}^{m}c_i^{js}(x){_{a_i}^C D_{x_i}^{\beta_{jsi}}},\quad 0< \beta_{jsi}\leq 1,
\end{equation*}
and ${_{a_i}^C D_{x_i}^{\beta_{jsi}}}p_s$, $c^{js}$, $c_i^{js}$ are $C^1$ functions defined on $\Omega$, $s=1,\ldots,r$, $i=1,\ldots,m$.

\item [II.] Operator of the second kind
\begin{equation*}\label{op2:m}
T^{js}=T^{js}_1:=c^{js}(x)+\sum_{i=0}^{m}c_i^{js}(x){_{x_i}^C D_{b_i}^{\beta_{jsi}}},\quad 0< \beta_{jsi}\leq 1,
\end{equation*}
and ${_{x_i}^C D_{b_i}^{\beta_{jsi}}}p$, $c^{js}$, $c_i^{js}$ are $C^1$ functions defined on $\Omega$, $s=1,\ldots,r$, $i=1,\ldots,m$.

\end{itemize}

We define invariance similarly to the one-dimensional case.
\begin{definition}\label{invar:frac:m}
The functional \eqref{problem} is invariant under transformations \eqref{trans:frac:m} if and only if for all $u\in C^1(\Omega,\mathbb{R}^n)$ we have
$$\int_{\Omega}\mathcal{L}(x,\bar{u},{^{C}\nabla^{\alpha}}\bar{u})dx=\int_{\Omega}\mathcal{L}(x,u,{^{C}\nabla^{\alpha}}u)dx.$$
\end{definition}

\begin{theorem}
\label{theo:tnoe:m} If functional \eqref{problem} is invariant under
transformations \eqref{trans:frac:m}, then there exist $r$ identities of the form
\begin{equation*}
\sum_{j=1}^{n}\tilde{T}^{js}\left(E_j^f(\mathcal{L})\right)=0, \quad s=1,\ldots,r,
\end{equation*}
where $\tilde{T}^{js}$ is the adjoint of $T^{js}$.
\end{theorem}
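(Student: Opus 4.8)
The plan is to mimic the proof of Theorem~\ref{theo:tnoe} (the single-integral case) essentially verbatim, the only new ingredient being the multidimensional fractional integration-by-parts step together with the divergence theorem to dispose of boundary terms. First I would start from Definition~\ref{invar:frac:m}, write the difference of the two integrals of $\mathcal{L}$ as a single integral over $\Omega$, and expand to first order using the Taylor formula: since $\bar{u}^j=u^j+\sum_{s=1}^r T^{js}(p_s)$ and $\bar u$ differs from $u$ by these (small) terms, the linearization gives
\begin{equation*}
0=\sum_{j=1}^{n}\int_{\Omega}\left[\frac{\partial \mathcal{L}}{\partial u^j}\,T^{js}(p_s)+\sum_{i=0}^{m}\frac{\partial \mathcal{L}}{\partial {^{C}_{a_{i}}D_{x_{i}}^{{\alpha}_i}}u^j}\,{^{C}_{a_{i}}D_{x_{i}}^{{\alpha}_i}}\bigl(T^{js}(p_s)\bigr)\right]dx,
\end{equation*}
where as in the one-dimensional proof the summation over $s$ is understood in $T^{js}(p_s)$.

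Next I would integrate by parts, in each variable $x_i$ separately, the $m+1$ terms involving the partial Caputo derivatives. Using Fubini to isolate the integral in the $x_i$-direction and applying Lemma~\ref{byparts} (the appropriate line of \eqref{ip} depending on whether the operator is of the first or second kind, i.e. left or right Caputo), each such term becomes
\begin{equation*}
\int_{\Omega}T^{js}(p_s)\,{_{x_i}D_{b_i}^{\alpha_i}}\frac{\partial \mathcal{L}}{\partial {^{C}_{a_{i}}D_{x_{i}}^{{\alpha}_i}}u^j}\,dx
\end{equation*}
plus a boundary contribution over $\partial\Omega$. Summing over $i$ and $j$ and collecting terms, the bulk integrand becomes exactly $\sum_j\bigl(\frac{\partial\mathcal L}{\partial u^j}+\sum_i {_{x_i}D_{b_i}^{\alpha_i}}\frac{\partial\mathcal L}{\partial {^C_{a_i}D_{x_i}^{\alpha_i}}u^j}\bigr)T^{js}(p_s)=\sum_j E_j^f(\mathcal L)\,T^{js}(p_s)$. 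Then, applying the definition of the adjoint operator $\tilde T^{js}$ (the multidimensional analogue of \eqref{a:op:1}--\eqref{a:op:2}, obtained by integrating by parts once more, now moving the operator $T^{js}$ off $p_s$ onto $E_j^f(\mathcal L)$), I get
\begin{equation*}
0=\sum_{j=1}^{n}\int_{\Omega}\sum_{s=1}^{r}\tilde T^{js}\bigl(E_j^f(\mathcal L)\bigr)\,p_s\,dx+[\text{boundary terms on }\partial\Omega].
\end{equation*}
Finally, since the $p_s$ are arbitrary and independent, I choose them (and enough of their derivatives and fractional derivatives) to vanish on $\partial\Omega$, which kills all the boundary terms; then the (multidimensional) fundamental lemma of the calculus of variations, applied one $s$ at a time, yields $\sum_{j=1}^n\tilde T^{js}(E_j^f(\mathcal L))=0$ for each $s=1,\ldots,r$.

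The main obstacle — really the only place requiring care — is the bookkeeping of boundary terms: with $m+1$ successive one-dimensional integrations by parts, each governed by Lemma~\ref{byparts}, one produces fractional-integral boundary contributions of the form $f(x){_{x_i}I_{b_i}^{1-\alpha_i}}g$ or $f(x){_{a_i}I_{x_i}^{1-\alpha_i}}g$ evaluated on the faces $x_i=a_i$ and $x_i=b_i$, and then a second round of integrations by parts inside the adjoint operators $\tilde T^{js}$ produces further boundary terms involving $p_s$ and its derivatives. One must check that the arbitrariness of the $p_s$ (and the smoothness hypotheses on their fractional derivatives listed in the definitions of the operators) is enough to force all of these simultaneously to vanish, exactly as was done with the conditions $p_s(a)=p_s(b)=0$, ${_a^C D_t^{\beta_{ksi}}}p_s|_{t=b}=0$, etc., in the single-integral proof. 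Everything else — linearity of the operators \eqref{RLFI1}--\eqref{CFD2}, the Taylor expansion, Fubini, and the final fundamental-lemma argument — is routine and parallels Subsection~\ref{main:single} step for step.
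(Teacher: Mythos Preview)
Your proposal is correct and follows essentially the same route as the paper's own proof: start from the invariance definition, linearize via Taylor, use Fubini together with the one-dimensional fractional integration-by-parts formula \eqref{ip} in each coordinate to produce $\sum_j E_j^f(\mathcal{L})\,T^{js}(p_s)$, then pass to the adjoint and invoke the arbitrariness of the $p_s$ (with suitable vanishing on $\partial\Omega$) plus the fundamental lemma. The paper likewise treats only the case $T^{js}=T^{js}_1$ explicitly and handles the boundary terms exactly as you describe.
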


\begin{proof}
We give the proof only for the case $T^{js}=T^{js}_1$; the other case can be proved similarly.
By Definition~\ref{invar:frac:m} we have
\begin{equation*}
0=\int_{\Omega}\mathcal{L}(x,\bar{u},{^{C}\nabla^{\alpha}}\bar{u})dx-\int_{\Omega}\mathcal{L}(x,u,{^{C}\nabla^{\alpha}}u)dx
=\int_{\Omega}\left(\mathcal{L}(x,\bar{u},{^{C}\nabla^{\alpha}}\bar{u})-\mathcal{L}(x,u,{^{C}\nabla^{\alpha}}u)\right)dx.
\end{equation*}
Then, by the Taylor formula
\begin{equation}\label{proof:1:m}
0=\sum_{j=1}^{n}\int_{\Omega}\left(\frac{\partial \mathcal{L}}{\partial u^j}T^{js}_1(p_s)+\sum_{i=0}^m \frac{\partial \mathcal{L}}{\partial {^{C}_{a_{i}}D_{x_{i}}^{{\alpha}_i}}u^j}{^{C}_{a_{i}}D_{x_{i}}^{{\alpha}_i}}T^{js}_1(p_s) \right)dx,
\end{equation}
where $T^{js}_1(p_s)=\sum_{s=1}^{r}T^{js}_1(p_s).$
The Fubini theorem allows us to rewrite integrals as the iterated integrals so that we can use the integration by parts formula \eqref{ip}:
\begin{equation}\label{proof:2:m}
\int_{\Omega}\sum_{i=0}^m \frac{\partial \mathcal{L}}{\partial {^{C}_{a_{i}}D_{x_{i}}^{{\alpha}_i}}u^j}{^{C}_{a_{i}}D_{x_{i}}^{{\alpha}_i}}T^{js}_1(p_s) dx=\int_{\Omega}\sum_{i=0}^m {_{x_{i}}D_{b_{i}}^{{\alpha}_i}}\frac{\partial \mathcal{L}}{\partial {^{C}_{a_{i}}D_{x_{i}}^{{\alpha}_i}}u^j}T^{js}_1(p_s)dx+[\cdot]|_{\partial \Omega},\quad j=1,\ldots,m,
\end{equation}
where $[\cdot]|_{\partial \Omega}$ represent the boundary terms -- $m$-volumes integrals. Since $p_s$ are arbitrary, we may choose $p_s$ such that: $p_s(x)|_{\partial \Omega}=0$ and ${_{a_i}^C D_{x_i}^{\beta_{jsi}}}p_s(t)|_{\partial \Omega}=0$, $s=1,\ldots,r$, $i=1,\ldots,l$. Therefore, the boundary term in \eqref{proof:2:m} vanishes and substituting \eqref{proof:2:m} into \eqref{proof:1:m} we get
\begin{equation*}\label{proof:4:m}
0=\sum_{j=1}^{n}\int_{\Omega}\left(\frac{\partial \mathcal{L}}{\partial u^j}+\sum_{i=0}^m {_{x_{i}}D_{b_{i}}^{{\alpha}_i}}\frac{\partial \mathcal{L}}{\partial {^{C}_{a_{i}}D_{x_{i}}^{{\alpha}_i}}u^j} \right)T^{js}_1(p_s)dx.
\end{equation*}
Now we proceed as in the one-dimensional case and define the adjoint operator $\tilde{T}^{js}_1$ of a fractional differential operator $T^{js}_1$ by \begin{equation*}
\int_{\Omega}q(x)T^{js}_1(p_s(x))dx=\int_{\Omega}p_s(x)\tilde{T}^{js}_1(q(x))dx+[\cdot]|_{\partial \Omega},\quad j=1,\ldots,n,\quad s=1,\ldots,r,
\end{equation*}
where we use the Fubini theorem. Again appealing to the arbitrariness of $p_s$ we can force the boundary term to vanish (by putting $p_s(x)|_{\partial \Omega}=0$). Therefore,
$$0=\sum_{j=1}^{n}\int_{\Omega}\sum_{s=1}^{r}\tilde{T}^{js}_1\left(\frac{\partial \mathcal{L}}{\partial u^j}+\sum_{i=0}^m {_{x_{i}}D_{b_{i}}^{{\alpha}_i}}\frac{\partial \mathcal{L}}{\partial {^{C}_{a_{i}}D_{x_{i}}^{{\alpha}_i}}u^j} \right)p_sdx.$$
Finally by the fundamental lemma of calculus of variations we conclude that
\begin{equation*}
\sum_{j=1}^{n}\tilde{T}^{js}_{1}\left(E_j^f(\mathcal{L})\right)=0, \quad s=1,\ldots,r.
\end{equation*}
\end{proof}

\begin{remark}
The adjoints of $T^{js}_i$, $i=1,2$, are given by expressions:
\begin{equation*}\label{a:op:1:m}
\tilde{T}_1^{js}(q)=c^{js}q+\sum_{i=0}^{m}{_{x_i} D_{b_i}^{\beta_{jsi}}}(c_i^{js}q),\quad j=1,\ldots,n,
\end{equation*}
\begin{equation*}\label{a:op:2:m}
\tilde{T}_2^{js}(q)=c^{js}q+\sum_{i=0}^{m}{_{a_i} D_{x_i}^{\beta_{jsi}}}(c_i^{js}q), \quad j=1,\ldots,n.
\end{equation*}
\end{remark}

\begin{remark}
The fractional differential operators $T^{js}_1$ and $T^{js}_2$ can of course be combined, that is, we can consider infinitesimal transformations that depend upon arbitrary functions and their partial fractional derivatives in the sense of Caputo: left and right with various orders.
\end{remark}

\section{Example}\label{example}

In order to illustrate our result we will use the Lagrangian density for the electromagnetic field (see \citet{gelfand}):
\begin{equation}\label{l:em}
\mathcal{L}=\frac{1}{8\pi}(\mathbf{E}^2-\mathbf{H}^2),
\end{equation}
where $\mathbf{E}$ and $\mathbf{H}$ are the electric field vector and the magnetic field vector, respectively. Following (\citet{Baleanu3}) we shall generalize \eqref{l:em} to the fractional Lagrangian density by changing classical partial derivatives by fractional. Let $x=(x_0,x_1,x_2,x_3)\in \Omega$; and $\mathbf{A}(x)=(A_1(x),A_2(x),A_3(x))$, $A_0(x)$ be a vector potential and a scalar potential, respectively. They are defined by setting
\begin{equation}\label{p:em}
\mathbf{E}=grad^{(\alpha_1,\alpha_2,\alpha_3)}A_0-{_{a_0} ^CD_{x_0}^{\alpha_{0}}}\mathbf{A},\quad \mathbf{H}=curl\mathbf{A},\quad 0<\alpha_{i}\leq1,\quad i=0,\ldots,3,
\end{equation}
where
\begin{equation*}
grad^{(\alpha_1,\alpha_2,\alpha_3)}A_0=\mathbf{i} {_{a_1} ^CD_{x_1}^{\alpha_{1}}}A_0+\mathbf{j}{_{a_2} ^CD_{x_2}^{\alpha_{2}}}A_0+\mathbf{k}{_{a_3} ^CD_{x_3}^{\alpha_{3}}}A_0,
\end{equation*}
\begin{equation*}
{_{a_0} ^CD_{x_0}^{\alpha_{0}}}\mathbf{A}=\mathbf{i} {_{a_0} ^CD_{x_0}^{\alpha_{0}}}A_1+\mathbf{j}{_{a_0} ^CD_{x_0}^{\alpha_{0}}}A_2+\mathbf{k}{_{a_0} ^CD_{x_0}^{\alpha_{0}}}A_3,
\end{equation*}
\begin{equation*}
curl\mathbf{A}=\mathbf{i}({_{a_2} ^CD_{x_2}^{\alpha_{2}}}A_3-{_{a_3} ^CD_{x_3}^{\alpha_{3}}}A_2)+\mathbf{j}({_{a_3} ^CD_{x_3}^{\alpha_{3}}}A_1-{_{a_1} ^CD_{x_1}^{\alpha_{1}}}A_3)+\mathbf{k}({_{a_1} ^CD_{x_1}^{\alpha_{1}}}A_2-{_{a_2} ^CD_{x_2}^{\alpha_{2}}}A_1).
\end{equation*}
Replacing $\mathbf{E}$ and $\mathbf{H}$ in \eqref{l:em} by their expressions \eqref{p:em} we obtain the fractional Lagrangian density
\begin{equation}\label{l:em:f}
\mathcal{L}=\frac{1}{8\pi}\left[(grad^{(\alpha_1,\alpha_2,\alpha_3)}A_0-{_{a_0}^CD_{x_0}^{\alpha_{0}}}\mathbf{A})^2-(curl\mathbf{A})^2\right].
\end{equation}
Note that, similarly to the integer case, the potential $(A_0,\mathbf{A})$ is not uniquely determined by the vectors $\mathbf{E}$ and $\mathbf{H}$. Namely, $\mathbf{E}$ and $\mathbf{H}$ do not change if we make a gauge transformation:
\begin{equation}\label{g:em}
\tilde{A}_j(x)=A_j(x)+{_{a_j} ^CD_{x_j}^{\alpha_{j}}}f(x),\quad j=0,\ldots,3,
\end{equation}
where $f:\Omega\rightarrow \R$ is an arbitrary function of class $C^2$ in all of its argument. Therefore, the Lagrangian density \eqref{l:em:f}, and hence the action functional, is invariant under transformation \eqref{g:em}. By Theorem~\eqref{theo:tnoe:m}, we conclude that
\begin{equation*}
\sum_{j=0}^{3}{_{x_j} D_{b_j}^{\alpha_{j}}}\left(E_j^f(\mathcal{L})\right)=0,
\end{equation*}
where $E_j^f(\mathcal{L})$ are Lagrange expressions corresponding to \eqref{l:em:f}. Equations $E_j^f(\mathcal{L})=0$ do not uniquely determine the potential $(A_0,\mathbf{A})$  and to avoid this lack of uniqueness, the fractional Lorentz condition can be imposed on $(A_0,\mathbf{A})$.

\section*{Acknowledgments}

The author is supported
by the Bia{\l}ystok University of Technology Grant S/WI/2/2011 and by the European Union Human Capital Programme: Podniesienie potencja{\l}u uczelni wyzszych jako czynnik rozwoju gospodarki opartej na wiedzy. The author is grateful to two anonymous reviewers for their comments.



\begin{thebibliography}{9}

\bibitem[Agrawal (2007)]{agrawalCap}
Agrawal OP (2007) Generalized Euler--Lagrange equations and
transversality conditions for FVPs in terms of the Caputo
derivative.  Journal of Vibration and Control 13(9-10): 1217--1237.

\bibitem[Agrawal et al.(2011)]{Agrawal}
Agrawal OP, Muslih SI and Baleanu D (2011)
Generalized variational calculus in terms of multi-parameters fractional derivatives.
Communications in Nonlinear Science and Numerical Simulation 16(12): 4756--4767.

\bibitem[Almeida and Torres (2009)]{Almeida2}
Almeida R and Torres DFM (2009) Calculus of variations with
fractional derivatives and fractional integrals. Applied Mathematics Letters
22(12): 1816--1820. arXiv:0907.1024

\bibitem[Almeida et al. (2010)]{MyID:182}
Almeida R, Malinowska AB and Torres DFM (2010)
A fractional calculus of variations for multiple integrals with
application to vibrating string. Journal of Mathematical Physics 51(3): 033503, 12 pp. arXiv:1001.2722v1

\bibitem[Almeida et al. (2012)]{Shakoor:01}
Almeida R, Pooseh S and Torres DFM (2012),
Fractional variational problems depending on indefinite integrals.
Nonlinear Analysis. Theory, Methods \& Applications 75(3): 1009--1025. arXiv:1102.3360


\bibitem[Atanackovi\'c et al. (2008)]{Atanackovic}
Atanackovi\'c TM, Konjik S and Pilipovi\'c S (2008) Variational
problems with fractional derivatives: Euler--Lagrange equations. Journal of Physics A 41(9): 095201, 12 pp.

\bibitem[Atanackovi\'c et al.(2009)]{Atanackovic2}
Atanackovi\'c TM, Konjik S Pilipovi\'c S and Simic S (2009)
Variational problems with fractional derivatives: Invariance conditions and Noether's theorem.
Nonlinear Analysis. Theory, Methods \& Applications 71(5-6): 1504--1517.

\bibitem[Baleanu and Avkar (2004)]{Baleanu:Avkar}
Baleanu D and Avkar T (2004) Lagrangians with linear velocities within Riemann-Liouville fractional derivatives. Nuovo Cimento 119: 73–79.

\bibitem[Baleanu and Muslih (2005)]{Baleanu:MUSLIH}
Baleanu D and Muslih S (2005) Lagrangian formulation of classical fields within Riemann--Liouville
fractional derivatives. Physica Scripta 72: 119--121.

\bibitem[Baleanu and Agrawal (2006)]{Baleanu:Agrawal}
Baleanu D and Agrawal OP (2006) Fractional Hamilton formalism
within Caputo's derivative. Czechoslovak Journal of Physics 56(10-11): 1087--1092.

\bibitem[Baleanu (2008)]{BALEANU}
Baleanu D (2008) Fractional constrained systems and Caputo derivatives. Journal of Computational and Nonlinear Dynamics 3:021102.

\bibitem[Baleanu et al. (2009)]{Baleanu3}
Baleanu D, Golmankhaneh AK Golmankhaneh AK and Baleanu MC (2009)
Fractional electromagnetic equations using fractional forms.
International Journal of Theoretical Physics 48: 3114--3123.

\bibitem[Baleanu et al. (2011)]{book:Baleanu}
Baleanu D, Diethelm K, Scalas E and Trujillo JJ (2011)
Fractional calculus: models and numerical methods. World Scientific.

\bibitem[Brading (2002)]{brading1}
Brading K (2002) Which Symmetry Noether, Weyl and conservation of electric charge.
Studies in History and Philosophy of Science. Part B. Studies in History and Philosophy of Modern Physics 33: 3--22.

\bibitem[Carinena et al. (2005)]{Carinena}
Carinena JF, Lazaro-Cami JA and Martinez E (2005) On second Noether's theorem and gauge symmetries in Mechanics.
International Journal of Geometric Methods in Modern Physics 3(3): 1--14.

\bibitem[Cresson (2007)]{Cresson}
Cresson J (2007)
Fractional embedding of differential operators
and Lagrangian systems.
Journal of Mathematical Physics 48(3): 033504, 34 pp.


\bibitem[El-Nabulsi and Torres (2007)]{El-Nabulsi:Torres07}
El-Nabulsi RA and Torres DFM (2007) Necessary optimality
conditions for fractional action-like integrals of variational
calculus with Riemann--Liouville derivatives of order
$(\alpha,\beta)$. Mathematical Methods in the Applied Sciences 30(15): 1931--1939.

\bibitem[El-Nabulsi (2011)]{El-Nabulsi}
El-Nabulsi RA (2011) A periodic functional approach to the calculus of variations and the
problem of time-dependent damped harmonic oscillators. Applied Mathematics Letters 2: 1647–-1653.

\bibitem[Fonseca Ferreira et al. (2008)]{ferreira}
Fonseca Ferreira MN, Duarte FB, Lima MFM et al. (2008) Application of fractional calculus
in the dynamical analysis and control of mechanical manipulators.
Fractional Calculus \& Applied Analysis  11(1): 91--113.

\bibitem[Frederico and Torres (2007)]{MyID:068}
Frederico GSF and Torres DFM (2007) A formulation of
Noether's theorem for fractional problems of the calculus of
variations.  Journal of Applied Mathematical Analysis and Applications 334(2): 834--846.
arXiv:math/0701187v1

\bibitem[Frederico and Torres (2008)]{Frederico:Torres08}
Frederico GSF and Torres DFM (2008) Fractional conservation
laws in optimal control theory. Nonlinear Dynamics 53(3): 215--222. arXiv:0711.0609

\bibitem[Frederico and Torres (2010)]{MyID:149}
Frederico GSF and Torres DFM (2010) Fractional
Noether's theorem in the Riesz-Caputo sense.
Applied Mathematics and Computation 217(3): 1023--1033. arXiv:1001.4507

\bibitem[Gelfand and Fomin (2000)]{gelfand}
Gelfand IM and Fomin SV (2000) Calculus of variations. Dover Publications, Mineola, NY.

\bibitem[Herrmann (2008)]{herman}
Herrmann R (2008) Gauge invariance in fractional field theories.  Physics Letters. A  372: 5515--5522.

\bibitem[Herzallah and Baleanu (2011)]{Herzallah:Baleanu}
Herzallah Mohamed A. E.; Baleanu Dumitru (2011) Fractional-order variational valculus with generalized boundary conditions.
Advances in Difference equations, Article ID 357580, 9 pages, doi:10.1155/2011/357580.

\bibitem[Hilfer (2000)]{hilfer2}
Hilfer R (2000) Applications of fractional calculus in physics. World Scientific
Publishing, River Edge, NJ.

\bibitem[Hydon and Mansfield (2011)]{hydon}
Hydon PE and Mansfield EL (2011) Extensions of Noether's Second Theorem:
from continuous to discrete systems. Proceedings the Royal Society A 00: (accessed June 22).

\bibitem[Jumarie (2007)]{p182:Jumarie4}
Jumarie G (2007)
Fractional Hamilton-Jacobi equation for the optimal control
of nonrandom fractional dynamics with fractional cost function.
Journal of Applied Mathematics and Computing 23(1-2): 215--228.


\bibitem[Kosmann-Schwarzbach (2010)]{Kosmann}
Kosmann-Schwarzbach Y (2010) The Noether theorems. Invariance and conservation
laws in the twentieth century. New York, Springer.

\bibitem[Kilbas et al. (2006)]{book:Kilbas}
Kilbas AA, Srivastava HM and Trujillo JJ (2006) Theory and
applications of fractional differential equations. Elsevier,
Amsterdam.

\bibitem[Klimek (2009)]{book:Klimek}
Klimek M (2009) On solutions of linear fractional differential equations of a variational type.
The Publishing Office of Czestochowa University of Technology, Czestochowa.

\bibitem[Logan (1974)]{logan1}
Logan JD (1974) On variational problems which admit an infinite continuous group, Yokohama Mathematical Journal 22: 31--42.

\bibitem[Logan (1977)]{logan2}
Logan JD (1977) Invariant variational principles. Academic Press [Harcourt Brace Jovanovich
Publishers], New York.


\bibitem[Love and Young (1938)]{int:partsRef}
Love ER and Young LC (1938) On fractional integration by parts,
Proceedings of the London Mathematical Society 44: 1--35.


\bibitem[Malinowska and Torres (2010)]{MyID:169}
Malinowska AB and Torres DFM (2010)
Generalized natural boundary conditions
for fractional variational problems in terms
of the Caputo derivative.
Computers \& Mathematics with Applications 59(9): 3110--3116.
arXiv:1002.3790v1

\bibitem[Malinowska (2012)]{mal}
Malinowska AB (2012) Fractional variational calculus for non-differentiable functions.
In Baleanu D, Tenreiro Machado JA and Luo ACJ (eds.) Fractional Dynamics and Control. Springer Science+Business Media, Part 2, 97--108.
arXiv:1201.6640v1

\bibitem[Malinowska and Torres (2011)]{MyID:217}
Malinowska AB and Torres DFM (2011)
Fractional calculus of variations for a combined Caputo derivative,
Fractional Calculus \& Applied Analysis 14(4): 523--537.
arXiv:1109.4664v1

\bibitem[Podlubny (1999)]{book:Podlubny}
Podlubny I (1999) Fractional differential equations. Academic Press, San Diego, CA.

\bibitem[Riewe (1996)]{rie}
Riewe F (1996)
Nonconservative Lagrangian and Hamiltonian mechanics.
Physical Review E (3) 53(2): 1890--1899.

\bibitem[Riewe (1997)]{rie2}
Riewe F (1997)
Mechanics with fractional derivatives.
Physical Review E (3)  55(3): 3581--3592.


\bibitem[Samko et al. (1993)]{samko}
Samko SG, Kilbas AA and Marichev OI (1993) Fractional
integrals and derivatives.  Translated from the 1987 Russian
original, Gordon and Breach, Yverdon.

\bibitem[Tenreiro Machado et al. (2010)]{TM:old}
Tenreiro Machado AJ, Kiryakova V and  Mainardi F (2010)
A poster about the old history of fractional calculus.
Fractional Calculus \& Applied Analysis 13: 447--454.

\bibitem[Tenreiro Machado et al. (2011)]{TM:rec}
Tenreiro Machado AJ, Kiryakova V and  Mainardi F (2011)
Recent history of fractional calculus.
Communications in Nonlinear Science and Numerical Simulation 16: 1140--1153.

\bibitem[Torres (2003)]{torres}
Torres DFM (2003) Gauge symmetries and Noether currents in optimal control. Applied Mathematics E-Notes  3: 49--57.
arXiv:math/0301116v1

\bibitem[Tarasov (2006)]{Tarasov2} Tarasov VE (2006)
Fractional variations for dynamical systems:
Hamilton and Lagrange approaches.
Journal of Physics A  39(26): 8409--8425.

\bibitem[Tarasov (2008)]{Tarasov} Tarasov VE (2008)
Fractional vector calculus and fractional Maxwell's equations,
Annals of Physics 323(11): 2756--2778.

\end{thebibliography}
\end{document}